\newcommand{\R}{\mathbb{R}}
\newcommand{\dif}[0]{\ensuremath{\,\mathrm{d}}}
\newcommand{\norm}[1]{\ensuremath{\left\Vert #1 \right\Vert}}
\newcommand{\abs}[1]{\ensuremath{\left\vert #1 \right\vert}}
\DeclareMathOperator*{\esssup}{ess\,sup}
\DeclareMathOperator*{\dive}{div}
\def\XXint#1#2#3{{\setbox0=\hbox{$#1{#2#3}{\int}$}
     \vcenter{\hbox{$#2#3$}}\kern-.5\wd0}}
\theoremstyle{plain}
\newtheorem{thm}{Theorem}
\newtheorem{lem}[thm]{Lemma}
\numberwithin{thm}{section}
\numberwithin{equation}{section}
\theoremstyle{definition}
\newtheorem{defn}[thm]{Definition}
\begin{document}

\title[A curious equation]{A curious equation involving the
  $\infty$-Laplacian}

\author{Peter Lindqvist} 

\author{Teemu Lukkari}

\subjclass[2000]{35J60, 49K35}

\date{\today}

\begin{abstract}
  We prove the uniqueness for viscosity solutions of the differential
  equation
  \begin{displaymath}
    \sum u_{x_i}u_{x_j}u_{x_i x_j}+\abs{\nabla u}^2\ln\abs{\nabla
      u}\langle\nabla u, \nabla \ln p \rangle=0.
  \end{displaymath}
  A variant of the Harnack inequality is derived.  The equation comes
  from the problem of finding
  \begin{displaymath}
    \min_u\max_x(\abs{\nabla u(x)}^{p(x)}).
  \end{displaymath}
  The positive exponent $p(x)$ is a continuously differentiable
  function.
\end{abstract}

\maketitle

\section{Introduction}

The object of our study is the differential equation 
\begin{equation}
  \label{eq:curious}
  \sum_{i,j=1}^{n} u_{x_i}u_{x_j}u_{x_i x_j}+\abs{\nabla u}^2\ln\abs{\nabla
      u}\langle\nabla u, \nabla \ln p \rangle=0.
\end{equation}
in a bounded domain $\Omega$ in $\R^n$. The main result is the
uniqueness for viscosity solutions, Theorem \ref{thm:uniqueness}.  We
also have a Harnack inequality in Section \ref{sec:harnack}.

In the case of the constant function $p(x)=p$, the last term is not
present, and the equation is well-known. During the last fifteen years
there has been a lot of research done for the $\infty$-Laplace
equation
\begin{equation}
  \label{eq:infty-lap}
  \Delta_\infty u\equiv\sum_{i,j=1}^{n} u_{x_i} u_{x_j}u_{x_i x_j}=0.
\end{equation}
This is the limit, as $p\to\infty$, of the Euler--Lagrange equations
\begin{equation}
  \label{eq:p-lap}
  \Delta_p u\equiv \abs{\nabla u}^{p-2}\Delta u+(p-2)\abs{\nabla
    u}^{p-4}\Delta_\infty u = 0
\end{equation}
for the variational integrals 
\begin{displaymath}
  \int_\Omega\abs{\nabla u(x)}^p\dif x.
\end{displaymath}
Thus it appears that the equation $\Delta_\infty u=0$ is formally the
Euler--Lagrange equation of the "variational integral"
\begin{displaymath}
  I(u)=\norm{\nabla u}_{L^\infty(\Omega)}=\esssup_{\Omega}\abs{\nabla u}.
\end{displaymath}
It is sometimes called Aronsson's Euler equation, after its
discoverer, who derived the equation in order to find the best
Lipschitz extension of given boundary values, cf. \cite{Aronsson}. The
equation must be interpreted in the sense of viscosity solutions. We
assume that the reader is familiar with the basic facts of this
fascinating theory, cf. \cite{UsersGuide, Primer, BeginnersGuide}.

In his remarkable work \cite{Jensen} R. Jensen succeeded in proving
the uniqueness of the viscosity solutions to \eqref{eq:infty-lap}. We
will closely follow Jensen's construction of auxiliary equations, when
we come to our uniqueness proof below.

Let us return to the equation \eqref{eq:curious}. The problem about 
\begin{displaymath}
  \min_u\max_x(\abs{\nabla u(x)}^{p(x)})
\end{displaymath}
with a variable exponent can be reached via the variational integrals
\begin{equation}\label{eq:kpx-integrals}
  \left\{\int_\Omega\abs{\nabla u(x)}^{kp(x)}
    \frac{\dif x}{kp(x)}\right\}^{\frac{1}{k}}
\end{equation}
as $k\to\infty$. Such integrals were first considered by Zhikov, cf.
\cite{Zhikov}. The Euler--Lagrange equation is
\begin{align*}
  \Delta_{kp(x)}u\equiv & \abs{\nabla u}^{kp(x)-2}\Delta u
  +(kp(x)-2)\abs{\nabla u}^{kp(x)-4}\Delta_\infty u
  \\&+\abs{\nabla u}^{kp(x)-2}\ln\abs{\nabla u}
  \langle\nabla u,\nabla k p(x)\rangle=0.
\end{align*}
Notice the extra term with $\nabla p$. Its formal limit as
$k\to\infty$ is the equation
\begin{equation}\label{eq:infty-x-lap}
  \Delta_{\infty(x)}u \equiv \Delta_\infty u+\abs{\nabla
    u}^2\ln\abs{\nabla u}\langle\nabla u,\nabla \ln p\rangle=0,
\end{equation}
and the reader can recognize \eqref{eq:curious}.

Thus the operator $\Delta_\infty$ is replaced by the new operator
$\Delta_{\infty(x)}$, the $\infty$-~Laplacian with variable exponent.
Again, the interpretation is delicate, since now $\nabla p$ is needed
pointwise. To be on the safe side, we therefore assume that $p\in
C^1(\Omega)\cap W^{1,\infty} (\Omega)$, $p(x)>1$, and that $\Omega$ is a
bounded domain in $\R^n$. Then viscosity solutions to
\eqref{eq:curious} can be defined in the standard way.

\begin{defn}
  We say that a lower semicontinuous function $v:\Omega\to
  (-\infty,\infty]$ is a \emph{viscosity supersolution} if, whenever
  $x_0\in \Omega$ and $\varphi\in C^2(\Omega)$ are such that
  \begin{enumerate}
  \item $\varphi(x_0)=u(x_0)$, and
  \item $\varphi(x)<v(x)$, when $x\not=x_0$, 
  \end{enumerate}
  we have
  \begin{displaymath}
    \Delta_{\infty(x_0)}\varphi(x_0)\leq 0.
  \end{displaymath}
\end{defn}

The \emph{viscosity subsolutions} have a similar definition; they are
upper semicontinuous, the test functions touch from above and the
differential inequality is reversed. Finally, a \emph{viscosity
  solution} is a function that is both a viscosity supersolution and
viscosity subsolution. A very simple example is
\begin{displaymath}
  u(x)=\abs{x};
\end{displaymath}
it is a viscosity subsolution for all exponents $p(x)$.

The boundary values are prescribed by a Lipschitz continuous function
$f:\partial\Omega\to\R$. It can be extended to the whole space with
the same constant, say
\begin{displaymath}
  \abs{f(x)-f(y)}\leq L\abs{x-y}, \quad x,y\in \R^n.
\end{displaymath}
According to Rademacher's theorem $f$ is differentiable a.e. and
$\norm{\nabla f}_\infty\leq L$. After the extension, one has $f\in
W^{1,\infty}(\R^n)$.

The following existence and uniqueness result holds for the Dirichlet
boundary value problem. 

\begin{thm}\label{thm:uniqueness}
  Given a Lipschitz continuous function $f:\partial\Omega\to\R$, there
  exists a unique viscosity solution $u\in C(\overline{\Omega})$ with
  boundary values $f$. Moreover, $u\in W^{1,\infty}(\Omega)$, and
  $\norm{\nabla u}_{L^\infty(\Omega)}$ has a bound depending only on
  the Lipschitz constant of $f$.
\end{thm}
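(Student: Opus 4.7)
The plan is to follow R. Jensen's strategy as indicated in the introduction, treating the extra variable-exponent term as a lower-order perturbation once $\abs{\nabla u}$ is bounded away from $0$. Existence together with the Lipschitz bound will come from an approximation through the variational functionals \eqref{eq:kpx-integrals}; uniqueness will come from a comparison principle for auxiliary PDEs in the spirit of \cite{Jensen}.

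For existence, let $u_k$ be a minimizer of \eqref{eq:kpx-integrals} with boundary data $f$, where $f$ has been extended to all of $\R^n$ with Lipschitz constant $L$. Comparing $u_k$ with this extension gives
\begin{displaymath}
  \left\{\int_\Omega \abs{\nabla u_k}^{kp(x)}\,\frac{\dif x}{kp(x)}\right\}^{1/k}
  \leq
  \left\{\int_\Omega L^{kp(x)}\,\frac{\dif x}{kp(x)}\right\}^{1/k},
\end{displaymath}
and the right-hand side tends to $L$ as $k\to\infty$. Standard arguments for these Zhikov-type functionals then produce a uniform $L^\infty$-bound on $\nabla u_k$, depending essentially only on $L$, so the $u_k$ are equicontinuous and Arzel\`a--Ascoli yields a uniform limit $u\in C(\overline{\Omega})\cap W^{1,\infty}(\Omega)$ with boundary values $f$ and the required gradient bound. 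A standard stability argument shows that each $u_k$ is a viscosity solution of $\Delta_{kp(x)}u_k=0$, that $\Delta_{kp(x)}$ converges formally to $\Delta_{\infty(x)}$, and that $u$ is therefore a viscosity solution of \eqref{eq:curious}.

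For uniqueness, I would mimic Jensen's auxiliary equations. For $\ve>0$ set
\begin{displaymath}
  G^+_\ve(x,\xi,X) = \min\bigl\{-\Delta_{\infty(x)}[\xi,X],\ \abs{\xi}-\ve\bigr\}, \qquad
  G^-_\ve(x,\xi,X) = \max\bigl\{-\Delta_{\infty(x)}[\xi,X],\ \ve-\abs{\xi}\bigr\},
\end{displaymath}
where $\Delta_{\infty(x)}[\xi,X] = \langle X\xi,\xi\rangle + \abs{\xi}^2\ln\abs{\xi}\langle\xi,\nabla\ln p(x)\rangle$ denotes the symbol of our operator. A viscosity supersolution of $G^+_\ve\geq 0$ satisfies, at every touch point from below, both $-\Delta_{\infty(x_0)}\varphi(x_0)\geq 0$ and $\abs{\nabla\varphi(x_0)}\geq\ve$, with a dual statement for subsolutions of $G^-_\ve\leq 0$. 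The gradient lower bound $\abs{\nabla\varphi}\geq\ve$ restores strict ellipticity of the test inequality and keeps the otherwise troublesome $\ln\abs{\nabla u}$ factor bounded. The main comparison step is then the standard doubling-of-variables argument: assuming $u(x)-v(y)-\Phi_\alpha(x-y)$ has an interior maximum at $(x_\alpha,y_\alpha)$ for an appropriate penalization $\Phi_\alpha$, the Crandall--Ishii lemma produces symmetric matrices $X\leq Y$ at a common gradient $q_\alpha$ with $\abs{q_\alpha}\geq\ve$, and subtracting the two viscosity inequalities yields a contradiction. Uniqueness for \eqref{eq:curious} then follows by sandwiching any two solutions between approximating solutions of the auxiliary equations and letting $\ve\to 0$.

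The main obstacle is the $x$-dependence of the zeroth-order term $\abs{\nabla u}^2\ln\abs{\nabla u}\langle\nabla u,\nabla\ln p(x)\rangle$. In the doubling step this term is evaluated at two different base points with a common gradient $q_\alpha$, producing a defect
\begin{displaymath}
  \abs{q_\alpha}^2\ln\abs{q_\alpha}\,\langle q_\alpha,\,\nabla\ln p(x_\alpha)-\nabla\ln p(y_\alpha)\rangle
\end{displaymath}
that must be driven to $0$ as $\alpha\to\infty$. This is exactly where the hypothesis $p\in C^1(\Omega)\cap W^{1,\infty}(\Omega)$ enters, together with the facts that $\abs{x_\alpha-y_\alpha}\to 0$ and that $\abs{q_\alpha}$ is bounded above (by the a priori Lipschitz estimate of the second paragraph) and bounded below away from zero (by the auxiliary equation). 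Once these two ingredients are in place, the difference of the two $\Delta_{\infty(x)}$-symbols is handled as in Jensen's proof for the pure $\Delta_\infty$, and the comparison argument closes.
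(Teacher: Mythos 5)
Your overall architecture matches the paper's: existence and the Lipschitz bound via minimizers of the $kp(x)$-functionals, uniqueness via Jensen's $\varepsilon$-auxiliary equations and doubling of variables. But there are two genuine gaps in the uniqueness half. First, the doubling argument as you describe it does not produce a contradiction. Subtracting the two viscosity inequalities at the common gradient $q_\alpha$ with $X\leq Y$ gives $0\leq j^2\langle (Y-X)q_\alpha,q_\alpha\rangle/j^2 + (\text{defect})$, and since the defect tends to $0$ you only obtain $0\leq 0$. You need a \emph{strict} supersolution, i.e.\ $\Delta_{\infty(x)}w\leq -\mu<0$, so that the conclusion reads $0\leq -\mu+o(1)$. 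The paper manufactures this by replacing the variational supersolution $v$ of the upper equation with $w=g(v)$, where $g(t)=\alpha^{-1}\log(1+A(e^{\alpha t}-1))$; the computation shows
\begin{displaymath}
  \Delta_{\infty(x)}w\leq \abs{\nabla w}^3\bigl[-\alpha\varepsilon+\norm{\nabla\ln p}_\infty\bigr]\frac{A-1}{1+A(e^{\alpha v}-1)},
\end{displaymath}
and choosing $\alpha$ large makes the bracket negative. This is the real role of the gradient lower bound $\abs{\nabla v}\geq\varepsilon$ from the auxiliary equation: it makes the perturbation term $g''(v)g'(v)^2\abs{\nabla v}^4$ strong enough to dominate the new $\nabla\ln p$ contribution, not merely to keep $\ln\abs{\nabla u}$ bounded. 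Without some such strictification your comparison step fails.

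Second, your final sentence assumes that the sandwich $u^-\leq u\leq u^+$ forces uniqueness as $\varepsilon\to 0$, but this requires knowing that $u^+-u^-\to 0$. That is not automatic: $u^+$ and $u^-$ solve different equations. The paper proves $\norm{u^+-u^-}_\infty\leq C\varepsilon^\kappa$ by testing the weak formulations of $\Delta_{kp(x)}u_k^\pm=\mp\varepsilon^{kp(x)-1}$ with $u_k^+-u_k^-$, applying the monotonicity inequality $\langle\abs{b}^{q-2}b-\abs{a}^{q-2}a,b-a\rangle\geq 2^{2-q}\abs{b-a}^q$, and extracting $k$th roots. This quantitative closeness of the two variational barriers is an essential ingredient you need to supply. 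Once both pieces are in place (strict supersolution for the comparison lemma, and the $\mathcal{O}(\varepsilon^\kappa)$ estimate), your outline does close along the same lines as the paper.
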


The main part of the proof is in Lemma \ref{lem:comparison} and Lemma
\ref{lem:variational-comparison}. The result below can be extracted
from our constructions.

\begin{thm}[Comparison principle]
  If $u$ is a viscosity subsolution and $v$ a viscosity supersolution
  to \eqref{eq:curious} in $\Omega$, then $v\geq u$ on $\partial
  \Omega$ implies that $v\geq u$ in $\Omega$.
\end{thm}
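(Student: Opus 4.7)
The plan is to deduce the Comparison principle as a short corollary of the two preceding Lemmas, which do the heavy lifting. Given a viscosity subsolution $u$ and a viscosity supersolution $v$ with $v\geq u$ on $\partial\Omega$, I would associate to each boundary datum a variational solution obtained by passing to the limit $k\to\infty$ in the minimizers of the $kp(x)$-integrals \eqref{eq:kpx-integrals}: let $U$ be the variational solution with boundary values $u|_{\partial\Omega}$ and $V$ the variational solution with boundary values $v|_{\partial\Omega}$.

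Lemma \ref{lem:comparison} is then used in both directions: applied to the subsolution $u$ it yields $u\leq U$ in $\Omega$, and applied in its symmetric form to the supersolution $v$ it yields $v\geq V$. Lemma \ref{lem:variational-comparison} compares two variational solutions in terms of their boundary data and delivers $U\leq V$ because $u|_{\partial\Omega}\leq v|_{\partial\Omega}$. Chaining these three inequalities gives
\[
u\leq U\leq V\leq v
\]
in $\Omega$, which is the desired conclusion.

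The hard part is of course not the chaining above but the two lemmas themselves. The extra term $\abs{\nabla u}^2\ln\abs{\nabla u}\langle\nabla u,\nabla\ln p\rangle$ is what makes a direct viscosity comparison delicate: it changes sign at $\abs{\nabla u}=1$ and is not monotone in the gradient, so the usual doubling-of-variables approach does not go through cleanly. My expectation is that the proof of Lemma \ref{lem:comparison} relies on a Jensen-type regularization, using sup-convolutions to turn $u$ into a semiconvex approximant and inf-convolutions to turn $v$ into a semiconcave one, combined with auxiliary perturbed equations that rule out the degenerate regime $\nabla u=0$, while the $\nabla\ln p$ term is kept under control using the assumed $C^1$-regularity of $p$ on $\overline{\Omega}$. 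Once such a comparison between viscosity and variational extremals is available, Lemma \ref{lem:variational-comparison} should reduce to a monotonicity statement for the underlying $kp(x)$-minimization problems before the passage to the limit $k\to\infty$, so that the two together give the sandwich used above.
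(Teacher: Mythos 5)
Your overall architecture---sandwich the given sub- and supersolution between variational solutions and chain the inequalities, which is how the paper intends the theorem to be ``extracted from the constructions''---is the right one, but both of your citation steps misstate what the lemmas actually provide, and each leaves a gap. First, Lemma \ref{lem:comparison} does not compare $u$ with the variational solution $U$ of the equation itself; it compares $u$ with the variational solution $u^+$ of the \emph{upper} auxiliary equation \eqref{eq:upper} (and supersolutions with $u^-$ of the lower one). This matters: the whole point of the auxiliary equations is that their solutions satisfy $\abs{\nabla u^{\pm}}\geq\varepsilon$ and are Lipschitz, which is what makes the doubling-of-variables argument in that lemma work; to relate $u^+$ back to $U=h$ you must use $u^-\leq h\leq u^+\leq u^-+\mathcal{O}(\varepsilon^{\kappa})$ and let $\varepsilon\to 0$. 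Second, Lemma \ref{lem:variational-comparison} does not compare two variational solutions with different boundary data; it sandwiches a single viscosity solution between $u^-$ and $u^+$ built from the \emph{same} boundary values. The monotonicity you invoke ($f\leq g$ on $\partial\Omega$ implies the corresponding variational solutions are ordered) is true, but it must be obtained from comparison for the weak solutions of the finite-$k$ problems $\Delta_{kp(x)}u=\mp\varepsilon^{kp(x)-1}$ before passing to the limit $k\to\infty$; it is not the content of that lemma.

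A cleaner extraction avoids the second issue entirely: take one comparison datum $f$ with $u\leq f\leq v$ on $\partial\Omega$ and build the pair $u^{\pm}=u^{\pm}_{f}$. The subsolution half of Lemma \ref{lem:comparison} gives $u\leq u^{+}$ in $\Omega$, the supersolution half gives $v\geq u^{-}$ in $\Omega$, and the estimate $u^{+}\leq u^{-}+\mathcal{O}(\varepsilon^{\kappa})$ yields $u\leq v+\mathcal{O}(\varepsilon^{\kappa})$; now let $\varepsilon\to 0$. Finally, your guess about how Lemma \ref{lem:comparison} is proved is off the mark: the paper uses no sup/inf-convolutions, but instead perturbs $u^{+}$ into a strict supersolution $w=g(u^{+})$ with $g(t)=\alpha^{-1}\log(1+A(e^{\alpha t}-1))$ and applies the theorem of sums directly, the nondegeneracy $\abs{\nabla u^{+}}\geq\varepsilon$ and the Lipschitz bound from Lemma \ref{lem:estimate} being exactly what controls the troublesome $\ln\abs{\nabla u}\,\nabla\ln p$ term. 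Since you take the lemmas as black boxes this does not invalidate your argument, but the two misattributions above do need to be repaired for the chain to close.
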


Passing a Caccioppoli estimate for the minimizers of the integrals
\eqref{eq:kpx-integrals} to the limit, we obtain the following form of
Harnack's inequality. This is similar to the one proved by Alkhutov
\cite{Alkhutov}.

\begin{thm}[Harnack's inequality]\label{thm:harnack}
  Let $u$ be a nonnegative viscosity solution of \eqref{eq:curious}.
  Then the inequality
  \begin{displaymath}
    \sup_{x\in B_R}u(x)\leq C\left(\inf_{x\in B_R}u(x)+R\right)
  \end{displaymath}
  holds, with the constant depending on the supremum of $u$ taken over
  $B_{2R}\subset\Omega$.
\end{thm}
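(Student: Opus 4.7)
The plan is to work at the level of the approximating minimizers $u_k$ of the $k$-th variational integral in \eqref{eq:kpx-integrals}, for which Harnack-type estimates are accessible through the $kp(x)$-Laplacian, and then pass to the limit $k\to\infty$. Via the construction that yields existence in Theorem~\ref{thm:uniqueness}, with boundary data adapted to $u$, the sequence $u_k$ converges locally uniformly to $u$; since $u\ge 0$, we may arrange that $u_k\ge 0$ as well.

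The core ingredient is a Caccioppoli inequality of Moser type for the nonnegative $u_k$. Testing the Euler--Lagrange equation $\Delta_{kp(x)}u_k=0$ against $\varphi=(u_k+R)^{1-kp(x)}\eta^{kp(x)}$, with $\eta\in C_0^\infty(B_{2R})$, $\eta\equiv 1$ on $B_R$ and $\abs{\nabla\eta}\le 2/R$, produces after the standard manipulation an estimate of the form
\[
\int_{B_R}\abs{\nabla\log(u_k+R)}^{kp(x)}\,\dif x \le C^{k}\int_{B_{2R}}\abs{\nabla\eta}^{kp(x)}\,\dif x,
\]
where $C$ can be taken independent of $k$. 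The novel term $k\abs{\nabla u_k}^{kp(x)-2}\ln\abs{\nabla u_k}\,\langle\nabla u_k,\nabla p\rangle$ produced by differentiating $p(x)$ is absorbed using a preliminary $k$-uniform Lipschitz estimate $\norm{\nabla u_k}_{L^\infty(B_{3R/2})}\le C\sup_{B_{2R}}u/R$, itself obtained from a first (non-logarithmic) Caccioppoli test with $\varphi=(u_k+R)\eta^{kp(x)}$ once the $k$-th root is taken. This is where the dependence on $\sup_{B_{2R}}u$ announced in the theorem enters.

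Taking $k$-th roots in the logarithmic Caccioppoli and letting $k\to\infty$, the $L^{kp(x)}$-norms converge to $L^\infty$-norms and one arrives at
\[
\esssup_{B_R}\abs{\nabla\log(u+R)}\le\frac{C}{R}.
\]
Integrating this bound along a segment in $B_R$ joining a near-supremum point to a near-infimum point of $u$ yields $\log\bigl((\sup_{B_R}u+R)/(\inf_{B_R}u+R)\bigr)\le 2C$, which after exponentiating and rearranging is exactly $\sup_{B_R}u\le C'(\inf_{B_R}u+R)$.

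The main technical difficulty is the anomalous term $\abs{\nabla u}^2\ln\abs{\nabla u}\,\langle\nabla u,\nabla\ln p\rangle$: its logarithm is neither of $p$-growth nor bounded, and a direct Young's inequality on the $kp(x)$-level equation forces constants that grow faster than $C^k$ and would not survive the $k$-th root. The two-stage strategy above — first a $k$-uniform Lipschitz bound via an elementary Caccioppoli, then its use as an absorbing device inside the logarithmic Caccioppoli — is what keeps the constants stable as $k\to\infty$. The additive $R$ on the right-hand side of the Harnack inequality is the unavoidable remnant of shifting $u_k$ by $R$ to ensure strict positivity uniformly in $k$.
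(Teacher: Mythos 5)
Your overall architecture is the same as the paper's: a logarithmic Caccioppoli estimate for the minimizers $u_k$ of the $kp(x)$-integrals, extraction of $k$-th roots, passage to an $L^\infty$ statement as $k\to\infty$ (this is Lemma \ref{lem:infinity-caccioppoli}), and then integration of the resulting bound on $\nabla\ln(u+R)$ along segments in $B_R$. One difference of execution: the paper does not test the weak Euler--Lagrange equation at all, but compares $u_k$ with the competitor $u_k+\varepsilon\zeta^{kp(x)}u_k^{1-kp(x)}$ and uses only minimality together with the convexity of $t\mapsto\abs{t}^{kp(x)}$; this makes $\varepsilon$ drop out and avoids any Young-type absorption.

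The genuine gap is in your treatment of the variable-exponent term. First, the term you single out, $k\abs{\nabla u_k}^{kp-2}\ln\abs{\nabla u_k}\langle\nabla u_k,\nabla p\rangle$, belongs to the pointwise (non-divergence) expansion of $\Delta_{kp(x)}$; it never appears when you test the divergence-form weak equation. What does appear, from differentiating $x\mapsto (u_k+R)^{1-kp(x)}\eta^{kp(x)}$ through the exponent, is a term carrying $\ln\bigl(\eta/(u_k+R)\bigr)\,k\nabla p$, and this is bounded outright by $\max\{\abs{\ln R},\,\ln(\sup u+R)\}\le\max\{1/R,\,\sup u+R\}$ --- precisely how the dependence on $\sup_{B_{2R}}u$ enters the constant in the paper, via the factor $\zeta\ln(\zeta/u)\nabla\ln p$ surviving into Lemma \ref{lem:infinity-caccioppoli}. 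No gradient bound is needed. Second, the device you propose instead, a $k$-uniform interior estimate $\norm{\nabla u_k}_{L^\infty(B_{3R/2})}\le C\sup_{B_{2R}}u/R$ ``obtained from a Caccioppoli test once the $k$-th root is taken,'' is not justified: a Caccioppoli inequality controls $\nabla u_k$ only in $L^{kp(x)}$, and taking $k$-th roots yields an $L^{k}$-type bound on $\abs{\nabla u_k}^{p(x)}$ that becomes an $L^\infty$ bound only in the limit $k\to\infty$; for fixed $k$ an interior $L^\infty$ gradient bound would require $C^{1,\alpha}$ regularity theory for the $p(x)$-Laplacian with constants tracked uniformly in $k$, which you neither prove nor cite. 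So as written the absorption step attacks a term that is not there and rests on an unproved estimate; replacing it by the elementary bound on $\ln(u_k+R)$ above (or by the paper's convexity argument) repairs the proof, and the rest of your chaining argument then goes through.
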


\section{The auxiliary equations}
\label{sec:auxiliary}

Following a device of Jensen in \cite{Jensen}, we introduce two
auxiliary equations with a positive parameter $\varepsilon$. The
situation will be
\begin{align}
  &\max\{\varepsilon-\abs{\nabla v},\Delta_{\infty(x)}v\}=0 
  & &\text{\textit{Upper equation}}\label{eq:upper}\\
  &\Delta_{\infty(x)}h=0 &  &\text{\textit{Equation}} \label{eq:equation}\\ 
  &\min\{\abs{\nabla u}-\varepsilon,\Delta_{\infty(x)}u\}=0   
  & &\text{\textit{Lower equation}}\label{eq:lower}
\end{align}
If the solutions have the same boundary values, it turns out that
$u\leq h\leq v$. We say that a continuous function $v:\Omega\to \R$ is
a viscosity supersolution of the upper equation, if whenever $x_0\in
\Omega$ and $\varphi \in C^2(\Omega)$ are such that
\begin{enumerate}
\item $\varphi(x_0)=v(x_0)$,
\item $\varphi(x)<v(x)$, when $x\not=x_0$
\end{enumerate}
then we have 
\begin{displaymath}
  \varepsilon-\abs{\nabla\varphi(x_0)}\leq 0\text{ and
  }\Delta_{\infty(x_0)}\varphi(x_0)\leq 0.
\end{displaymath}
Notice that the differential operator is evaluated only at the
touching point $x_0$. The other definitions are analogous; the
viscosity subsolutions of the lower equation are also used later.

The existence of solutions is proved through the following variational
procedure. The equation 
\begin{displaymath}
  \Delta_{kp(x)}u=-\varepsilon^{kp(x)-1},
\end{displaymath}
or in its weak form
\begin{equation}
  \label{eq:upper-weak}
  \int_{\Omega}\langle \abs{\nabla u}^{kp(x)-2}\nabla u,\nabla\eta\rangle\dif x
  =\int_\Omega \varepsilon^{kp(x)-1}\eta\dif x,
\end{equation}
when $\eta\in C_0^\infty(\Omega)$, is the Euler--Lagrange equation of
the variational integral
\begin{equation}
  \label{eq:upper-variational}
  \int_{\Omega}\abs{\nabla u}^{kp(x)}\frac{\dif x}{kp(x)}
  -\int_{\Omega}\varepsilon^{kp(x)-1}\dif x.
\end{equation}
As $k\to\infty$, we get the \emph{upper equation}
\begin{equation}
  \label{eq:upper-limit}
  \max\{\varepsilon-\abs{\nabla v},\Delta_{\infty(x)}v\}=0.
\end{equation}
Suppose now that $u_k\in C(\overline{\Omega})\cap
W^{1,kp(x)}(\Omega)$, $u_k=f$ on $\partial\Omega$, is the minimizer of
the above variational integral. Then it satisfies the weak equation
and a standard procedure shows that it is a viscosity solution, see
for example \cite{Jensen} about the method.  It follows from the
minimizing property that
\begin{displaymath}
  \int_{\Omega}\abs{\nabla v_k}^{kp(x)}\frac{\dif x}{kp(x)}
  \leq \int_{\Omega} \abs{\nabla f}^{kp(x)}\frac{\dif
    x}{kp(x)}
  +\int_{\Omega} \varepsilon^{kp(x)-1}(v_k-f)\dif x,
\end{displaymath}
since $f$ is admissible.We can see that $v_k\to v\in C(\overline{\Omega}) \cap
W^{1,\infty}(\Omega)$ uniformly in $\Omega$, at least for a
subsequence. We have
\begin{displaymath}
  \norm{\abs{\nabla v}^{p(x)}}_{\infty}\leq \norm{\abs{\nabla
      f}^{p(x)}}_{\infty} 
  +\norm{\varepsilon^{p(x)}}_{\infty}.
\end{displaymath}
It is again a standard procedure to verify that this $v$ is a
viscosity supersolution of the upper equation, cf. \cite{Jensen}. (It
is also a viscosity subsolution.)  We call $v$ a \emph{variational
  solution}, because it is a limit of minimizers of variational
integrals. We record an immediate estimate.
\begin{lem}\label{lem:estimate}
  A variational solution of the upper equation satisfies
  \begin{displaymath}
    \norm{\nabla v}_{\infty}\leq K,
  \end{displaymath}
  where $K$ depends only on the Lipschitz constant of the boundary
  values.
\end{lem}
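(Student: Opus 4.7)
The plan is to read the lemma off from the bound
$\norm{\abs{\nabla v}^{\p}}_\infty \leq \norm{\abs{\nabla f}^{\p}}_\infty + \norm{\varepsilon^{\p}}_\infty$
already displayed immediately before the statement. Writing $p^- = \inf_\Omega p$ and $p^+ = \sup_\Omega p$, the right-hand side is majorized by a constant $M=M(L,\varepsilon,p^\pm)$ since $\norm{\nabla f}_\infty \leq L$. The pointwise inequality $\abs{\nabla v(x)}^{p(x)} \leq M$ a.e.\ then yields $\abs{\nabla v(x)} \leq \max(M^{1/p^-}, M^{1/p^+}) =: K$, and $K$ has precisely the dependence claimed.

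To justify that displayed bound, I would start from the minimizing inequality with competitor $f$, namely
\[
\int_\Omega \abs{\nabla v_k}^{kp(x)}\frac{\dif x}{kp(x)} \leq \int_\Omega \abs{\nabla f}^{kp(x)}\frac{\dif x}{kp(x)} + \int_\Omega \varepsilon^{kp(x)-1}(v_k - f)\dif x,
\]
and rewrite $\abs{\nabla v_k}^{kp(x)} = (\abs{\nabla v_k}^{p(x)})^k$ so that the left side is essentially $\norm{\abs{\nabla v_k}^{\p}}_{L^k(\Omega)}^k$, up to the harmless factor $1/(kp(x))$. Since $\norm{\nabla f}_\infty \leq L$ and $\{v_k\}$ is bounded in $L^\infty$ (from uniform convergence $v_k\to v$), both terms on the right are controlled uniformly in $k$, modulo factors that tend to $1$ after taking $k$-th roots.

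To convert this into an $L^\infty$ statement about the limit $v$, I would freeze an auxiliary exponent $m$ and apply Hölder's inequality $\norm{g}_{L^m} \leq \abs{\Omega}^{1/m - 1/k}\norm{g}_{L^k}$ with $g = \abs{\nabla v_k}^{\p}$ and $k\geq m$, obtaining a bound on $\norm{\abs{\nabla v_k}^{\p}}_{L^m(\Omega)}$ uniform in $k$. Weak lower semicontinuity of $u \mapsto \int_\Omega \abs{\nabla u}^{mp(x)} \dif x$ along a subsequence $v_k \rightharpoonup v$ transfers the bound to $v$, and sending $m\to\infty$ produces the desired $L^\infty$ estimate on $\abs{\nabla v}^{\p}$. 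The main obstacle is precisely this double limit: the exponent $k$ itself diverges on the left, so one cannot pass to the limit naively, and the manoeuvre of freezing $m$, taking $k\to\infty$ via Hölder plus weak lower semicontinuity, and only then releasing $m\to\infty$, is what makes the argument honest. After that, the reduction from $\norm{\abs{\nabla v}^{\p}}_\infty$ to $\norm{\nabla v}_\infty$ is purely algebraic because $p$ is bounded away from $0$ and $\infty$.
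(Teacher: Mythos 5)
Your proposal is correct and follows the paper's own route: the lemma is recorded there as an immediate consequence of the displayed bound $\norm{\abs{\nabla v}^{p(x)}}_{\infty}\leq \norm{\abs{\nabla f}^{p(x)}}_{\infty}+\norm{\varepsilon^{p(x)}}_{\infty}$, which comes from the minimizing property with competitor $f$ via exactly the standard freeze-$m$/H\"older/weak-lower-semicontinuity limiting procedure you spell out. One small point: since $\varepsilon<1$ is kept throughout, $\norm{\varepsilon^{p(x)}}_{\infty}\leq 1$, so your constant $M$ (and hence $K$) is in fact uniform in $\varepsilon$, which is needed for the lemma's claim that $K$ depends only on the Lipschitz constant.
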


For the lower auxiliary equation 
\begin{equation}
  \label{eq:lower-limit}
  \min\{\abs{\nabla u}-\varepsilon,\Delta_{\infty(x)}u\}=0
\end{equation}
the various stages are 
\begin{displaymath}
  \Delta_{kp(x)}u=\varepsilon^{kp(x)-1},
\end{displaymath}
\begin{displaymath}
  \int_{\Omega}\langle\abs{\nabla u}^{kp(x)-2}\nabla u,\nabla\eta\rangle\dif x
  =-\int_\Omega \varepsilon^{kp(x)-1}\eta\dif x,  
\end{displaymath}
\begin{displaymath}
  \int_{\Omega}\abs{\nabla u}^{kp(x)}\frac{\dif x}{kp(x)}
  +\int_{\Omega}\varepsilon^{kp(x)-1}\dif x.
\end{displaymath}
The situation is analogous to the previous case, but now the
subsolutions count.

We have to construct solutions of the auxiliary equations that are
close for small values of $\varepsilon$. Let $u^-_k$, $u_k$, and
$u_k^+$ be the weak solutions of the lower equation \eqref{eq:lower},
the equation \eqref{eq:equation} and the upper equation
\eqref{eq:upper}, all with the same boundary values $f$.  Then
\begin{displaymath}
  u^-_k\leq u_k \leq u_k^+
\end{displaymath}
by comparison. The weak solutions are viscosity solutions of their
respective equations.  Select a subsequence of indices so that all
three converge, say $u_k^-\to u^-$, $u_k\to h$, and $u^+_k\to u^+$.
Thus
\begin{align*}
  \dive(\abs{\nabla u^+_k}^{kp(x)-2}\nabla
  u_k^+)&=-\varepsilon^{kp(x)-1}\\
  \dive(\abs{\nabla u^-_k}^{kp(x)-2}\nabla
  u_k^-)&=+\varepsilon^{kp(x)-1},
\end{align*}
and, using $u^+_k-u^-_k$ as a test function in the weak formulation of
the equations and subtracting these, we obtain
\begin{align*}
  \int_{\Omega}\langle \abs{\nabla u^+_k}^{kp(x)-2}\nabla u_k^+
  -&\abs{\nabla u^-_k}^{kp(x)-2}\nabla u_k^-,
  \nabla u_k^+-\nabla u_k^- \rangle\dif x\\
  =&\int_{\Omega}\varepsilon^{kp(x)-2}(u_k^+-u^-_k)\dif x.
\end{align*}
With the aid of the elementary inequality
\begin{displaymath}
  \langle\abs{b}^{q-2}b-\abs{a}^{q-2}a,b-a\rangle\geq 2^{2-q}\abs{b-a}^q
\end{displaymath}
for vectors $b,a$ and $q\geq 2$, we obtain
\begin{displaymath}
  4\int_{\Omega}\abs{\frac{\nabla u^+_k-\nabla
      u_k^-}{2}}^{kp(x)}\dif x
  \leq
  \frac{1}{\varepsilon}\int_{\Omega}\varepsilon^{kp(x)}(u_k^+-u_k^-)\dif x.
\end{displaymath}
Extracting the $k$th roots, we conclude that
\begin{displaymath}
  \esssup\abs{\frac{\nabla u^+-\nabla u^-}{2}}^{p(x)}
  \leq \sup(\varepsilon^{p(x)}).
\end{displaymath}
Keeping $\varepsilon<1$, we arrive at the estimates
\begin{align*}
  \norm{\nabla u^+-\nabla u^-}_{\infty}\leq & C'\varepsilon^{\kappa}\\
  \norm{u^+-u^-}_{\infty}\leq C\varepsilon^{\kappa}
\end{align*}
where $\kappa$ depends only on the bounds on $p(x)$. The obtained
functions $u^+$, $u^-$ and $h$ are viscosity solutions of their
equations.

We have the result
\begin{displaymath}
  u^-\leq h\leq u^+\leq u^-+\mathcal{O}(\varepsilon^\kappa)
\end{displaymath}
for solutions $u^-$ (lower equation), $h$ (the equation), and $u^+$
(upper equation) coming from the variational procedure. This does not
yet prove that variational solutions are unique. The possibility that
another subsequence yields three new ordered solutions is difficult to
exclude. (At least it can be arranged so that the same $h$ will do for
all $\varepsilon$, though the constructed $u^-$, $u^+$ depend on
$\varepsilon$.)

\begin{lem}\label{lem:variational-comparison}
  If $u\in C(\overline{\Omega})$ is an arbitrary viscosity solution of the
  equation, $u=f$ on $\partial \Omega$, then
  \begin{displaymath}
    u^-\leq u\leq u^+,
  \end{displaymath}
  where $u^-$, $u^+$ are the constructed variational solutions of the
  auxiliary equations. 
\end{lem}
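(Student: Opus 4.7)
The plan is to reduce the lemma to the comparison principle for the auxiliary equations (Lemma \ref{lem:comparison}) by observing that every viscosity solution of \eqref{eq:curious} is simultaneously a subsolution of the upper equation \eqref{eq:upper} and a supersolution of the lower equation \eqref{eq:lower}. Once that observation is in place, comparison between $u$ and $u^{\pm}$, which share the boundary data $f$, yields the sandwich $u^{-}\leq u\leq u^{+}$.

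First I would verify the two inclusions. Let $\varphi\in C^2(\Omega)$ touch $u$ from above at $x_0$. Since $u$ is a viscosity subsolution of \eqref{eq:curious}, we have $\Delta_{\infty(x_0)}\varphi(x_0)\geq 0$, and therefore
\begin{displaymath}
  \max\{\varepsilon-\abs{\nabla\varphi(x_0)},\,\Delta_{\infty(x_0)}\varphi(x_0)\}\geq 0,
\end{displaymath}
which is precisely the subsolution condition for \eqref{eq:upper}. Dually, if $\varphi$ touches $u$ from below at $x_0$, then $\Delta_{\infty(x_0)}\varphi(x_0)\leq 0$, and so
\begin{displaymath}
  \min\{\abs{\nabla\varphi(x_0)}-\varepsilon,\,\Delta_{\infty(x_0)}\varphi(x_0)\}\leq 0,
\end{displaymath}
the supersolution condition for \eqref{eq:lower}. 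These implications are immediate from the definitions; no new analysis is required.

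Next I would apply the comparison principle. The variational construction already identifies $u^{+}$ as a viscosity supersolution of the upper equation and $u^{-}$ as a viscosity subsolution of the lower equation, both attaining the boundary values $f$. Invoking Lemma \ref{lem:comparison} with $u$ (sub) and $u^{+}$ (super) for the upper problem gives $u\leq u^{+}$, while the same lemma applied to $u^{-}$ (sub) and $u$ (super) for the lower problem gives $u^{-}\leq u$.

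The entire difficulty sits not in the present lemma but in the comparison principle for the auxiliary equations. That is where the benefit of the auxiliary formulations is exploited: a supersolution of \eqref{eq:upper}, and symmetrically a subsolution of \eqref{eq:lower}, comes with the strict non-degeneracy $\abs{\nabla\varphi}\geq\varepsilon$ on test functions, which tames the degeneracy of $\Delta_{\infty(x)}$ at zero gradient and allows a Jensen-style doubling-of-variables argument to close. The present lemma itself is essentially bookkeeping: identifying that an arbitrary solution $u$ occupies the opposite role in each auxiliary equation, and that the boundary data line up. No interior obstacle arises beyond granting Lemma \ref{lem:comparison}.
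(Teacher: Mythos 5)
Your proposal is correct and follows the same route as the paper: the lemma is exactly the observation that a viscosity solution of \eqref{eq:curious} is simultaneously a viscosity subsolution and a viscosity supersolution, so both halves of Lemma \ref{lem:comparison} apply with the common boundary data $f$, and all the analytic work lives in that comparison lemma. Your intermediate step of recasting $u$ as a subsolution of \eqref{eq:upper} and a supersolution of \eqref{eq:lower} is harmless but not even needed, since Lemma \ref{lem:comparison} as stated already takes a sub/supersolution of \eqref{eq:curious} itself and compares it directly with the variational solutions $u^{\pm}$.
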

This lemma, which will be proved in Section \ref{sec:comparison},
implies that
\begin{displaymath}
  \abs{h-u}\leq \mathcal{O}(\varepsilon^\kappa).
\end{displaymath}
Since $u$ is independent of $\varepsilon$, it is unique; use 
\begin{displaymath}
  \abs{u_1-u_2}\leq \abs{h-u_1}+\abs{h-u_2}\leq\mathcal{O}(\varepsilon^\kappa) 
\end{displaymath}
and let $\varepsilon\to 0$ to see that two viscosity solutions $u_1$,
$u_2$ coincide. Thus Theorem~\ref{thm:uniqueness} follows. This also
shows that $u$ is a variational solution, since $u=h$.

\section{Proof of the comparison principle}

\label{sec:comparison}

Recall the variational solutions $u^+$ and $u^-$ of the auxiliary
equations in Section \ref{sec:auxiliary}. They satisfy the
inequalities
\begin{displaymath}
  \varepsilon\leq \abs{\nabla u^{\pm}}\leq K.
\end{displaymath}
The crucial part of the proof is in the following lemma.
\begin{lem}\label{lem:comparison}
  If $u\in C(\overline{\Omega})$ is a viscosity subsolution of the
  equation \eqref{eq:curious}, and if $u\leq f = u^+$ on
  $\partial\Omega$, then $u\leq u^+$ in $\Omega$.
  
  The analogous comparison holds for viscosity supersolutions lying
  above $u^-$.
\end{lem}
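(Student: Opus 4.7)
\emph{Plan.}
The strategy is to argue by contradiction, in the spirit of Jensen's comparison proof for the $\infty$-Laplacian in \cite{Jensen}, the key structural input being the strict gradient lower bound $|\nabla u^+|\geq\varepsilon$ built into the upper equation. A preliminary reduction is that every viscosity subsolution $u$ of \eqref{eq:curious} is automatically a viscosity subsolution of \eqref{eq:upper}: a touching test function $\varphi\in C^2$ satisfies $\Delta_{\infty(x_0)}\varphi\geq 0$, hence $\max\{\varepsilon-|\nabla\varphi(x_0)|,\Delta_{\infty(x_0)}\varphi\}\geq 0$. The task thus reduces to a comparison between a subsolution and a supersolution of the single equation \eqref{eq:upper} with matching boundary data, so the gradient bound is available on both sides.

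Assuming for contradiction that $M:=\max_{\overline\Omega}(u-u^+)>0$, I would regularize by sup- and inf-convolutions at scale $\sigma>0$,
\begin{displaymath}
u^\sigma(x)=\sup_y\Bigl\{u(y)-\tfrac{|x-y|^2}{2\sigma}\Bigr\},\qquad u^+_\sigma(x)=\inf_y\Bigl\{u^+(y)+\tfrac{|x-y|^2}{2\sigma}\Bigr\}.
\end{displaymath}
On a slightly shrunken subdomain $\Omega_\sigma$ these are respectively semiconvex and semiconcave, converge uniformly to their targets, and inherit the viscosity sub/supersolution properties for \eqref{eq:upper} up to a perturbation of the $\nabla\ln p$-term that vanishes with $\sigma$ (this uses $p\in C^1$). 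For $\sigma$ small, the difference $w:=u^\sigma-u^+_\sigma$ still attains a positive interior maximum on $\Omega_\sigma$. By Aleksandrov's theorem both regularizations are twice differentiable almost everywhere, and Jensen's lemma on linear perturbations of semiconvex functions produces a point $x_*\in\Omega_\sigma$ of common twice differentiability with $w(x_*)$ arbitrarily close to $M$, and with $\nabla u^\sigma(x_*)-\nabla u^+_\sigma(x_*)=\xi$, $D^2u^\sigma(x_*)\leq D^2u^+_\sigma(x_*)$, $|\xi|$ as small as we please.

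Set $q:=\nabla u^+_\sigma(x_*)$. The supersolution side at $x_*$ gives $|q|\geq\varepsilon$ together with the pointwise second-order inequality at $(x_*,q,D^2u^+_\sigma(x_*))$. Since $|q+\xi|\geq\varepsilon/2$ the gradient alternative in the subsolution formulation is excluded, and the second alternative gives the reversed second-order inequality at $(x_*,q+\xi,D^2u^\sigma(x_*))$. Subtracting, the $\infty$-Laplacian contribution is nonpositive because $D^2u^\sigma\leq D^2u^+_\sigma$, while the $\nabla\ln p(x_*)$-terms cancel up to an error $O(|\xi|)+o_\sigma(1)$.

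The main obstacle is that this bare subtraction returns only $0\geq o(1)$, a well-known manifestation of the degeneracy of $\Delta_\infty$. To turn it into a genuine contradiction I would strictify the subsolution: replace $u$ by $u_\lambda:=u-\lambda\eta$ for small $\lambda>0$, with $\eta\in C^2(\overline\Omega)$ vanishing on $\partial\Omega$ and chosen so that the previous pointwise subtraction acquires a strictly positive lower bound $c\lambda$ independent of $\sigma$ and $|\xi|$. Designing such an $\eta$ for the operator $\Delta_{\infty(x)}$---exploiting the $C^1$ regularity of $p$ and a smooth radial building block---is the delicate technical point I expect to be the bottleneck. Once this strictification is in place, letting first $\sigma\to 0$, then $|\xi|\to 0$, and finally $\lambda\to 0$ closes the argument. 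The companion assertion for supersolutions lying above $u^-$ follows by the mirror argument, now using the strict gradient bound $|\nabla u^-|\geq\varepsilon$ coming from the lower equation \eqref{eq:lower}.
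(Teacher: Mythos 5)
Your overall architecture matches the paper's: reduce to a comparison for the upper equation, exploit the gradient lower bound $\abs{\nabla u^+}\geq\varepsilon$ to control the logarithm in the $\nabla\ln p$ term, and close with a second-order maximum-principle argument (your sup/inf-convolution plus Jensen's lemma is an acceptable substitute for the paper's doubling of variables and theorem of sums). You also correctly diagnose that the bare subtraction of the two jet inequalities yields only $0\geq o(1)$ and that a strictness mechanism is required. But that mechanism is precisely the content of the proof, and you leave it open -- worse, the route you sketch for it points in a direction that does not work.

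You propose to strictify the \emph{subsolution} by an additive perturbation $u_\lambda=u-\lambda\eta$. Two obstructions. First, $\Delta_{\infty(x)}$ is not linear in $u$: if $\varphi$ touches $u$ from above then $\varphi-\lambda\eta$ touches $u_\lambda$ from above, but $\Delta_{\infty(x_0)}(\varphi-\lambda\eta)(x_0)$ differs from $\Delta_{\infty(x_0)}\varphi(x_0)$ by cross terms in $\nabla\varphi$, $\nabla\eta$, $D^2\varphi$, $D^2\eta$ with no sign and no a priori control, since nothing quantitative is known about the test functions for an arbitrary subsolution. Second, the resource that makes any strictification possible -- the bound $\abs{\nabla\cdot}\geq\varepsilon$ -- lives on the supersolution $u^+$ of the upper equation, not on $u$, so the perturbation must be applied on the supersolution side. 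The paper does exactly this, compositionally rather than additively: $w=g(u^+)$ with $g(t)=\alpha^{-1}\log(1+A(e^{\alpha t}-1))$, for which $\Delta_\infty w=g'(v)^3\Delta_\infty v+g'(v)^2g''(v)\abs{\nabla v}^4$. The strictly negative term $g''(v)\abs{\nabla v}^4\leq -\alpha(g'-1)g'\varepsilon^4\cdot(\dots)$ is usable only because $\abs{\nabla u^+}\geq\varepsilon$, and $\alpha$ is then chosen large enough that $-\alpha\varepsilon+\norm{\nabla\ln p}_\infty\leq -2$, so the exponential perturbation also absorbs the new $\ln(g'(v))\abs{\nabla\ln p}$ error coming from rewriting $\ln\abs{\nabla v}$ in terms of $\ln\abs{\nabla w}$; finally $A\to 1$ keeps $w$ within $\delta$ of $u^+$ so the interior maximum survives. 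Until you supply a strict supersolution (or an equivalent device) with a quantified margin $-\mu<0$, the proof is not complete; as written, the step you yourself flag as the bottleneck is a genuine gap, and the additive-perturbation-of-the-subsolution idea would need to be replaced, not merely elaborated.

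One smaller point: your claim that $\abs{q+\xi}\geq\varepsilon/2$ ``excludes the gradient alternative in the subsolution formulation'' is not needed and is not correct as stated (the alternative $\varepsilon-\abs{\nabla\varphi}\geq 0$ is excluded only if $\abs{q+\xi}>\varepsilon$); since $u$ is a subsolution of the original equation \eqref{eq:curious}, the second-order inequality holds unconditionally and this detour should simply be dropped.
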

\begin{proof}
  By adding a constant we may assume that $u^+>0$. Write $v=u^+$ for
  simplicity. We claim that $v\geq u$. We use the \emph{antithesis}
  \begin{displaymath}
    \max_{\Omega}(u-v)>\max_{\partial \Omega}(u-v).
  \end{displaymath}
  We will construct a strict supersolution $w=g(v)$ of the upper
  equation such that also
  \begin{equation}\label{eq:comparison-proof1}
    \max_{\Omega}(u-w)>\max_{\partial \Omega}(u-w),
  \end{equation}
  and 
  \begin{displaymath}
    \Delta_{\infty(x)}w\leq -\mu<0
  \end{displaymath}
  in $\Omega$. This will lead to a contradiction.

  In fact, we will use the expedient approximation 
  \begin{displaymath}
    g(t)=\frac{1}{\alpha}\log(1+A(e^{\alpha
      t}-1))
  \end{displaymath}
  of the identity, which was studied in
  \cite{JuutinenLindqvistManfrediARMA}. Here $A>1$ and $\alpha>0$. Now
  \begin{align*}
    0&<g(t)-t<\frac{A-1}{\alpha},\\
    0&<g'(t)-1<A-1,\\
  \end{align*}
  assuming that $t\geq 0$. Further
  \begin{equation}\label{eq:g1}
    \frac{g''(t)}{g'(t)}=-\alpha[g'(t)-1]=
    -\frac{\alpha(A-1)}{1+A(e^{\alpha t}-1)},
  \end{equation}
  and
  \begin{equation}\label{eq:g2}
    0\leq \log(g'(t))=\log[1+(g'(t)-1)]\leq g'(t)-1,
  \end{equation}
  provided that $A<2$.

  To prove the comparison, we need the equation for $w=f(v)$. We have
  \begin{align*}
    w          &=g(v), \quad w_{x_i}=g'(v)v_{x_i},\\
    w_{x_i x_j}&=g''(v)v_{x_i}v_{x_j}+g'(v)v_{x_i x_j}\\
    \Delta_\infty w&=g'(v)^3\Delta_\infty v +g'(v)^2g''(v)\abs{\nabla v}^4.
  \end{align*}
  Multiplying the upper equation (for supersolutions)
  \begin{displaymath}
    \max\{\varepsilon-\abs{\nabla v},\Delta_{\infty(x)}v\}\leq 0
  \end{displaymath}
  by $g'(v)^3$, we formally obtain that
  \begin{displaymath}
    \Delta_{\infty}w-g'(v)^2 g''(v)\abs{\nabla v}^4
    +\abs{\nabla w}^2\ln(\abs{\nabla v})
    \langle \nabla w,\nabla \ln p\rangle\leq 0.
  \end{displaymath}
  Writing $\ln\abs{\nabla v}=\ln(\abs{\nabla w})-\ln(g'(v))$ we obtain
  the equation
  \begin{multline*}
    \Delta_\infty w+\abs{\nabla w}^2\ln(\abs{\nabla w})
    \langle\nabla w,\nabla \ln p\rangle\\
    \leq \abs{\nabla w}^3\left[\frac{g''(v)}{g'(v)}\abs{\nabla v}
    +\ln(g'(v))\abs{\nabla \ln p}\right].
  \end{multline*}
  We also have $\varepsilon\leq \abs{\nabla v}$. Using \eqref{eq:g1}
  and \eqref{eq:g2}, we can write
  \begin{align*}
    \Delta_\infty w\leq &\abs{\nabla w}^3
    \left[-\alpha\varepsilon+\norm{\nabla \ln p}_\infty\right]
    \frac{A-1}{1+A(e^{\alpha v}-1)}\\
    \leq & \,\varepsilon^3 g'(v)^4[-\alpha\varepsilon+\norm{\nabla \ln
      p}_\infty](A-1)A^{-1}e^{-\alpha v}.
  \end{align*}
  Given $\varepsilon>0$, fix $\alpha=\alpha(\varepsilon)$ so large
  that
  \begin{displaymath}
    -\alpha\varepsilon+\norm{\nabla\ln p}_\infty\leq -2.
  \end{displaymath}
  Then fix $A$ so close to 1 that
  \begin{displaymath}
    0<w-v=g(v)-v<\frac{A-1}{\alpha} <\delta,
  \end{displaymath}
  where $\delta$ is small enough to guarantee
  \eqref{eq:comparison-proof1}. With these adjustments, the right-hand
  member is less than the \emph{negative} quantity
  \begin{displaymath}
    -\mu=-\varepsilon^3 1^4 (A-1)A^{-1}e^{-\alpha\norm{v}_\infty}.
  \end{displaymath}
  The resulting equation is 
  \begin{displaymath}
    \Delta_{\infty(x)}w\leq -\mu.
  \end{displaymath}
  The described procedure was formal. The reader should replace $v$ by
  a test function $\varphi$ touching $v$ from below at a point $x_0$
  and $w$ should be replaced by the test function $\psi=g(\varphi)$,
  which touches $w$ from below. The inversion $\varphi=g^{-1}(\psi)$
  is evident. We have proved that
  \begin{displaymath}
    \Delta_{\infty(x_0)}\psi(x_0)\leq -\mu
  \end{displaymath}
  whenever $\psi$ touches $w$ from below at $x_0\in\Omega$. 

  We aim at using "the theorem of sums", formulated in terms of the
  so-called superjets and subjets.  For the jets and their closures,
  we refer to \cite{UsersGuide, Primer, BeginnersGuide}. Start with
  doubling the variables:
  \begin{displaymath}
    M=\sup_{\substack{x\in\Omega\\y\in\Omega}}\left(u(x)-w(y)
      -\frac{j}{2}\abs{x-y}^2\right).
  \end{displaymath}
  The maximum is attained at the interior points $x_j$, $y_j$ (for
  large indices) and 
  \begin{displaymath}
    x_j\to \Hat{x}, \qquad y_j\to \Hat{x},
  \end{displaymath}
  where $\Hat{x}$ is an interior point, the same for both sequences.
  It cannot be on the boundary due to \eqref{eq:comparison-proof1}. (It
  is known that $j\abs{x_j-y_j}^2\to 0$.) We need the bound
  \begin{displaymath}
    j\abs{x_j-y_j}\leq C.
  \end{displaymath}
  To obtain it, we reason as follows:
  \begin{multline*}
    u(x_j)-w(y_j)-\frac{j}{2}\abs{x_j-y_j}^2\\ \geq 
    u(x_j)-w(x_j)-\frac{j}{2}\abs{x_j-x_j}^2=u(x_j)-w(x_j),
  \end{multline*}
  so that
  \begin{displaymath}
    \frac{j}{2}\abs{x_j-y_j}^2\leq w(x_j)-w(y_j)
    \leq \norm{\nabla w}_{\infty}\abs{x_j-y_j}.
  \end{displaymath}
  Now $\norm{\nabla w}_{\infty}= \norm{g'(v)\nabla v}_{\infty}\leq
  KA$ according to Lemma \ref{lem:estimate}. The bound follows
  with $C=2KA$. (This is why $v$ has to be a variational
  solution!)  Further, we need the bound 
  \begin{displaymath}
    j\abs{x_j-y_j}\geq\varepsilon,
  \end{displaymath}
  which follows from $\nabla w=g'(v)\nabla v$, since $g'(v)\geq 1$ and
  $\abs{\nabla v}\geq \varepsilon$ in the viscosity sense. 

  The theorem of sums assures that there exist symmetric $n\times
  n$-matrices $X_j$ and $Y_j$ such that $X_j\leq Y_j$ and
  \begin{align*}
    (j(x_j-y_j),X_j)\in & \overline{J^{2,+}}u(x_j),\\
    (j(x_j-y_j),Y_j)\in & \overline{J^{2,-}}w(y_j)
  \end{align*}
  where $\overline{J^{2,+}}u(x_j)$ and $\overline{J^{2,-}}w(y_j)$ are
  the closures of the super- and subjets, respectively. We can rewrite
  the equations as
  \begin{align*}
    j^2&\langle Y_j(x_j-y_j),x_j-y_j\rangle \\
    &+ j^3\abs{x_j-y_j}^2 \ln(j\abs{x_j-y_j})\langle x_j-y_j,
    \nabla \ln p(y_j)\rangle \leq -\mu,\\
     j^2&\langle X_j(x_j-y_j),x_j-y_j\rangle \\
    &+ j^3\abs{x_j-y_j}^2 \ln(j\abs{x_j-y_j})\langle x_j-y_j,
    \nabla \ln p(x_j)\rangle \geq 0,\\
    j&\abs{x_j-y_j}\geq \varepsilon,\\
    j&\abs{x_j-y_j}\leq C.
  \end{align*}
  Subtract the equations and move the terms with $\ln p$. Then
  \begin{multline*}
    j^2\langle (Y_j-X_j)(x_j-y_j),(x_j-y_j)\rangle \\
    \begin{aligned}
      \leq  & -\mu\\ & +j^3\abs{x_j-y_j}^2\ln(j\abs{x_j-y_j})
      \langle x_j-y_j,\nabla \ln
      p(x_j)-\nabla\ln p(y_j)\rangle\\
      \leq &-\mu+ C^3\ln\left(\frac{C}{\varepsilon}\right)
      \abs{\nabla \ln p(x_j)-\nabla\ln p(y_j)}.
    \end{aligned}
  \end{multline*}
  The last term approaches zero as $j\to\infty$, because of the
  continuity. The very first term is non-negative, since $Y_j\geq
  X_j$. The contradiction
  \begin{displaymath}
    0\leq -\mu +0
  \end{displaymath}
  arises. Therefore, the antithesis is false, and consequently $u\leq
  v$. This concludes the proof.
\end{proof}

\section{Estimates for solutions}
\label{sec:harnack}

In this section, we prove some simple estimates for the positive
viscosity solutions of \eqref{eq:curious}.  In particular, we show
that they satisfy a version of Harnack's inequality, similar to the one
for solutions of the $p(x)$-Laplacian, cf. \cite{Alkhutov}.

We start by deriving a Caccioppoli estimate for finite exponents. Let
$u$ be a nonnegative minimizer, and set
\begin{displaymath}
  v(x)=u(x)+\varepsilon\zeta(x)^{p(x)}u(x)^{1-p(x)},  
\end{displaymath}
where $\zeta\in C^\infty_0(\Omega)$ is nonnegative. Then
\begin{multline*}
  \nabla v= \left(1-(p-1)\varepsilon
    \left(\frac{\zeta }{u}\right)^{p}\right)\nabla u\\
  +\varepsilon(p-1)\left(\frac{\zeta }{u}\right)^{p}
  \left[\frac{p}{p-1}\left(\frac{u}{\zeta}\right)\nabla \zeta
  +\frac{1}{p-1}u\ln\left(\frac{\zeta}{u}\right)\nabla p\right].
\end{multline*}
Observe that $\nabla v$ is a convex combination of $\nabla u$ and the
expression in the brackets, provided that $\lambda =
(p-1)\varepsilon\left(\frac{\zeta}{u}\right)^{p}\leq 1$. To see that
we may choose $\varepsilon$ sufficiently small to accomplish this,
first consider $u+\delta>0$ instead of $u$. Since $\varepsilon$ will
disappear from the estimate, we may safely let $\delta\to 0$ in the
end.

We use the minimizing property of $u$ and the convexity to get
\begin{multline*}
  \int_{\Omega}\abs{\nabla u}^{p}\frac{\dif x}{p}\leq 
  \int_{\Omega}\abs{\nabla v}^{p} \frac{\dif x}{p}\\
  \begin{aligned}
    \leq & \int_{\Omega}\left(1-\varepsilon(p-1)
      \left(\frac{\zeta}{u}\right)^p\right)\abs{\nabla u}^{p}
    \frac{\dif x}{p}\\
    &+\int_{\Omega}\varepsilon(p-1)\left(\frac{\zeta}{u}\right)^p
    \abs{\frac{p}{p-1}\left(\frac{u}{\zeta}\right) \nabla \zeta
      +\frac{1}{p-1}u\ln\left(\frac{\zeta}{u}\right)
      \nabla p}^{p}\frac{\dif x}{p}.
  \end{aligned}
\end{multline*}
This simplifies to 
\begin{displaymath}
  \int_\Omega (p-1)\zeta^p\abs{\nabla \ln u}^p\frac{\dif x}{p}\leq 
  \int_{\Omega}\frac{p^p}{(p-1)^{p-1}}\abs{\nabla \zeta 
    +\zeta\ln\left(\frac{\zeta}{u}\right)\frac{\nabla
      p}{p}}^{p}\frac{\dif x}{p} 
\end{displaymath}
where $\varepsilon$ has now disappeared.  

Next, we replace $p(x)$ by $kp(x)$; the above inequality then holds
for the corresponding minimizers $u_k$, and we can assume that they
converge uniformly to the solution $u$ of \eqref{eq:curious}. Then we
can take $k$th roots and let $k\to \infty$. This gives the following
lemma.

\begin{lem} \label{lem:infinity-caccioppoli} Let $u$ be a positive
  viscosity solution to \eqref{eq:curious}, and $\zeta$ a positive,
  compactly supported smooth function. Then
  \begin{displaymath}
    \sup_{x\in \Omega}\abs{\zeta(x)\nabla\ln u(x)}^{p(x)}\leq 
    \sup_{x\in \Omega}\abs{\nabla\zeta(x)
      +\zeta(x)\ln\left(\frac{\zeta(x)}{u(x)}\right)\nabla \ln p(x)}^{p(x)}.
  \end{displaymath}
\end{lem}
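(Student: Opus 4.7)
The strategy is exactly the one sketched immediately before the statement: start from the finite-exponent Caccioppoli inequality with $p(x)$ replaced by $kp(x)$, take $k$th roots of both sides, and let $k\to\infty$.

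First I would record the inequality for the minimizer $u_k$ of the $kp(x)$-energy, which is just the estimate derived above after substituting $kp$ for $p$:
\[
\int_\Omega (kp-1)\,\zeta^{kp}\abs{\nabla \ln u_k}^{kp}\frac{\dif x}{kp}
\leq
\int_{\Omega}\frac{(kp)^{kp}}{(kp-1)^{kp-1}}
\abs{\nabla \zeta +\zeta\ln(\zeta/u_k)\nabla \ln p}^{kp}\frac{\dif x}{kp},
\]
using that $\nabla(kp)/(kp)=\nabla \ln p$ is independent of $k$. Then I would take the $k$th root of both sides and examine the two limits termwise.

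For the right-hand side, the scalar factor $\bigl((kp)^{kp}/((kp-1)^{kp-1}\,kp)\bigr)^{1/k}$ reduces to $e^{1/k}$ times a polynomial in $k$, hence tends to $1$. Combined with the variable-exponent identity $(a^{kp(x)})^{1/k}=a^{p(x)}$ and the standard convergence of $L^k$-means to the $L^\infty$ norm, this turns the right-hand side, after the $k$th root, into $\esssup_\Omega\abs{\nabla \zeta +\zeta\ln(\zeta/u)\nabla \ln p}^{p(x)}$; replacing $u_k$ by $u$ is justified by the uniform convergence $u_k\to u$ and the positivity $u\geq c>0$ on $\spt(\zeta)$, which make the logarithm continuous under the limit.

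The left-hand side is handled by the same asymptotics and would formally produce $\esssup_\Omega \abs{\zeta\nabla \ln u}^{p(x)}$, but here the nontrivial issue appears: $\nabla \ln u_k$ need not converge strongly. The standard remedy, which I would import, is a two-parameter trick: for fixed $m\ll k$, Hölder's inequality on the bounded set $\spt(\zeta)$ controls $\norm{\zeta\abs{\nabla \ln u_k}^{p(x)}}_{L^m}$ by $\norm{\zeta\abs{\nabla \ln u_k}^{p(x)}}_{L^k}$, with a prefactor tending to $1$ as $k\to\infty$. Passing first $k\to\infty$ in this $L^m$-estimate yields an $L^m$ bound for the limit $\zeta\abs{\nabla \ln u}^{p(x)}$ against the right-hand supremum, and then $m\to\infty$ upgrades this to the desired $L^\infty$ bound. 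This transfer of a gradient bound through the iterated limit is the main obstacle; the rest is bookkeeping of the asymptotics of the constants together with the uniform convergence $u_k\to u$.
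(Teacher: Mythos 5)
Your proposal is correct and follows essentially the same route as the paper, whose proof consists precisely of the sketch preceding the lemma: derive the finite-exponent Caccioppoli estimate for the minimizers of the $kp(x)$-energy, take $k$th roots, and let $k\to\infty$. The only addition is your treatment of the weak convergence of $\nabla\ln u_k$ on the left-hand side via the $L^m$/$L^k$ interpolation and lower semicontinuity, a standard point the paper leaves implicit.
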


Observe that Lemma \ref{lem:infinity-caccioppoli} reduces to a
well-known estimate for solutions to the $\infty$-~Laplace equation
\eqref{eq:infty-lap} when $p(x)$ is constant.

Harnack's inequality is now a rather simple consequence of Lemma
\ref{lem:infinity-caccioppoli}. Indeed, take a cutoff function $\zeta$
compactly supported in $B(x_0,2R)$ such that $\zeta=1$ in $B(x_0,R)$,
$0\leq \zeta\leq 1$ and $\abs{\nabla \zeta}\leq 2/R$. Then, for
$v=u+R$, we get by the fundamental theorem of calculus that
\begin{displaymath}
  \abs{\ln v(x)-\ln v(y)}\leq \left(1+
  \norm{\abs{\zeta\nabla\ln v}^{p(x)}}_{\infty,B(x_0,2R)}\right)\abs{x-y}
\end{displaymath}
for $x, y\in B(x_0,R)$. Observing that $\abs{\ln v}\leq
\max\{R^{-1},\norm{v}_{\infty,\,B(x_0,2R)}\}$, Lemma
\ref{lem:infinity-caccioppoli} implies that the right hand side is
estimated by
\begin{displaymath}
  C_1\norm{v}_\infty\abs{x-y}+C_2\frac{\abs{x-y}}{R}.
\end{displaymath}
Taking exponents of both sides and replacing $v$ by $u+R$ we get
\begin{displaymath}
  u(x)+R\leq \exp(C_2\abs{x-y}/R)\exp(C_1\norm{u+R}_\infty\abs{x-y})(u(y)+R).
\end{displaymath}
The Harnack inequality of Theorem \ref{thm:harnack} follows from this,
although the estimate is more powerful.

A couple of variants are also obtained by similar reasoning. For
instance, one can replace $R$ by $R^\alpha$ for any $\alpha> 0$,
the price to pay being that $C_2=\mathcal{O}(\alpha)$.  Similarly, one
can have any positive power $\varepsilon$ on the supremum of $u$ in
the constant, with $C_1=\mathcal{O}(1/\varepsilon)$.

\def\cprime{$'$}


\begin{thebibliography}{JLM01}

\bibitem[Alk97]{Alkhutov}
Yu.~A. Alkhutov.
\newblock The {H}arnack inequality and the {H}\"older property of solutions of
  nonlinear elliptic equations with a nonstandard growth condition.
\newblock {\em Differ. Uravn.}, 33(12):1651--1660, 1726, 1997.

\bibitem[Aro67]{Aronsson}
G.~Aronsson.
\newblock Extension of functions satisfying {L}ipschitz conditions.
\newblock {\em Ark. Mat.}, 6:551--561 (1967), 1967.

\bibitem[CIL92]{UsersGuide}
M.~G. Crandall, H.~Ishii, and P.-L. Lions.
\newblock User's guide to viscosity solutions of second order partial
  differential equations.
\newblock {\em Bull. Amer. Math. Soc. (N.S.)}, 27(1):1--67, 1992.

\bibitem[Cra97]{Primer}
M.~G. Crandall.
\newblock Viscosity solutions: a primer.
\newblock In {\em Viscosity solutions and applications ({M}ontecatini {T}erme,
  1995)}, volume 1660 of {\em Lecture Notes in Math.}, pages 1--43. Springer,
  Berlin, 1997.

\bibitem[Jen93]{Jensen}
R.~Jensen.
\newblock Uniqueness of {L}ipschitz extensions: minimizing the sup norm of the
  gradient.
\newblock {\em Arch. Rational Mech. Anal.}, 123(1):51--74, 1993.

\bibitem[JLM99]{JuutinenLindqvistManfrediARMA}
P.~Juutinen, P.~Lindqvist, and J.~J. Manfredi.
\newblock The {$\infty$}-eigenvalue problem.
\newblock {\em Arch. Ration. Mech. Anal.}, 148(2):89--105, 1999.

\bibitem[JLM01]{JuutinenLindqvistManfrediSIAM}
P.~Juutinen, P.~Lindqvist, and J.~J. Manfredi.
\newblock On the equivalence of viscosity solutions and weak solutions for a
  quasi-linear equation.
\newblock {\em SIAM J. Math. Anal.}, 33(3):699--717 (electronic), 2001.

\bibitem[Koi04]{BeginnersGuide}
S.~Koike.
\newblock {\em A Beginner's Guide to the Theory of Viscosity Solutions},
  volume~13 of {\em MSJ Memoirs}.
\newblock Mathematical Society of Japan, Tokyo, 2004.

\bibitem[Zhi86]{Zhikov}
V.~V. Zhikov.
\newblock Averaging of functionals of the calculus of variations and elasticity
  theory.
\newblock {\em Izv. Akad. Nauk SSSR Ser. Mat.}, 50(4):675--710, 877, 1986.

\end{thebibliography}
\end{document}